\documentclass[11pt,longtable,letterpaper,oneside,reqno]{amsart}
\usepackage{amsmath,amsfonts,amsthm,amssymb,amscd,mathtools,stmaryrd,url}
\usepackage{bbm}
\usepackage{epsfig}
\usepackage{graphicx}
\usepackage{latexsym}
\usepackage{longtable}
\usepackage{float}
\usepackage{hyperref}
\usepackage{color}

\DeclareFontFamily{OT1}{rsfs}{}
\DeclareFontShape{OT1}{rsfs}{n}{it}{<-> rsfs10}{}
\DeclareMathAlphabet{\mathscr}{OT1}{rsfs}{n}{it}


%
\newtheorem{prop}{Proposition}[section]
\newtheorem{thm}[prop]{Theorem}

\newtheorem{lem}[prop]{Lemma}
\newtheorem{defn}{Definition}

\newtheorem {defn }{Definition}

\numberwithin{equation}{section}
\usepackage[margin=1in]{geometry}
\usepackage{lipsum}

\setcounter{tocdepth}{3}
\setcounter{secnumdepth}{3}


\begin{document}
\title{On a conjecture of Deaconescu}
\author[E. Hasanalizade]{Elchin Hasanalizade}
\address{Department of Mathematics and Computer Science, University of Lethbridge, 4401 University Drive, Lethbridge, Alberta, T1K 3M4 Canada}
\email{e.hasanalizade@uleth.ca}
\begin{abstract}
In 2000 Deaconescu raised a question whether there exists a composite $n$ for which $S_2(n)|\phi(n)-1$, where $\phi(n)$ is Euler's function and $S_2(n)$ is Schemmel's totient function. In this paper we prove that any such $n$ is odd, squarefree and has at least seven distinct prime factors. We also prove that any such $n$ with exactly $K$ distinct prime divisors is necessarily less than $2^{2^{K+1}}$.
\end{abstract}

\subjclass{11A25}
\keywords{\noindent Euler's function, Schemmel's totient}
\date{\today}
\maketitle

\section{Introduction}

Let $\phi$ denote Euler's totient function. In 1932 Lehmer \cite{L} conjectured that if $\phi(n)|n-1$, then $n$ has to be a prime number. A composite positive integer satisfying that divisibility is called Lehmer number or number with Lehmer property. Although this problem has not been settled so far, several partial results are known. Lehmer himself proved that if $n$ has Lehmer property then $n$ is odd, squarefree and has at least seven distinct prime factors. Cohen and Hagis \cite{CH} using computational methods established that $\omega(n)\ge14$, where $\omega(n)$ denotes the number of distinct prime divisors of $n$. Burcsi et al. \cite{BCF} showed if additionally $3|n$, then $\omega(n)\ge40\cdot10^6$ and $n>10^{36\cdot10^7}$. On the other hand, Pomerance \cite{P} proved that every Lehmer number $n$ is $<K^{2^K}$, where $K=\omega(n)$. Recently, Burek and {\.Z}mija \cite{BZ} have improved this upper bound to $2^{2^K}-2^{2^{K-1}}$.

In 2000 Deaconescu \cite{D} conjectured that for $n\ge2$
\begin{align*}
S_2(n)|\phi(n)-1
\end{align*}
if and only if $n$ is prime, where $S_2(n)$ is Schemmel's totient function defined by 
\begin{align*}
S_2(n)=n\prod_{p|n}\big(1-\frac{2}{p}\big).
\end{align*}

This problem seems to be as challenging as Lehmer's problem. Clearly, the conjecture states that for every $M\ge1$, the set $D_M$ of integers satisfying 
\begin{align}
\label{Eq1}
MS_2(n)=\phi(n)-1
\end{align}
contains only prime numbers. We say that a composite integer $n$ is a Deaconescu number (or has the Deaconescu property) if it satisfies \eqref{Eq1}.

In this short note we prove the following results.

\begin{thm}
\label{Thm1}
If $n$ is a Deaconescu number, then $n$ is odd, squarefree and $\omega(n)\ge7$.
\end{thm}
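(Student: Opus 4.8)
The plan is to follow the template of Lehmer's theorem, with $S_2$ playing the role that $n$ plays in the Lehmer problem. I would first settle the two structural claims. If $2\mid n$ then the factor $1-2/2=0$ in $S_2(n)$ gives $S_2(n)=0$, so \eqref{Eq1} forces $\phi(n)=1$, impossible for composite $n$; hence $n$ is odd. For squarefreeness, write $n=\prod_{p\mid n}p^{e_p}$; then $\phi(n)=\prod p^{e_p-1}(p-1)$ and $S_2(n)=\prod p^{e_p-1}(p-2)$. If some $e_p\ge2$, then $p$ divides both $\phi(n)$ and $S_2(n)$, so \eqref{Eq1} yields $p\mid\phi(n)-MS_2(n)=1$, a contradiction. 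Thus $n=p_1\cdots p_k$ is a product of $k=\omega(n)$ distinct odd primes, and \eqref{Eq1} becomes $M\prod_{i=1}^k(p_i-2)=\prod_{i=1}^k(p_i-1)-1$.

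Next I would pin down $M$. Each $p_i$ is odd, so $\prod(p_i-2)$ and $\prod(p_i-1)-1$ are both odd, whence $M$ is odd. Setting $a_i=p_i-2\ge1$, the identity $\prod(a_i+1)-\prod a_i=\sum_{\emptyset\ne S\subsetneq\{1,\dots,k\}}\prod_{i\in S}a_i\ge k\ge2$ shows $M\ne1$, so $M\ge3$. Dividing the equation by $\prod(p_i-2)$ gives $M=\prod_{p\mid n}\frac{p-1}{p-2}-\big(\prod(p_i-2)\big)^{-1}$, hence $M<\prod_{p\mid n}\frac{p-1}{p-2}$ (indeed $M=\lfloor\prod\frac{p-1}{p-2}\rfloor$). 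Since the product is largest on the smallest odd primes and $\frac{2}{1}\cdot\frac{4}{3}\cdot\frac{6}{5}=\frac{16}{5}>3$ is first reached at three primes, $M\ge3$ already forces $\omega(n)\ge3$.

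The core of the proof is to rule out $k\in\{3,4,5,6\}$. The product inequality alone does not suffice: for $k=6$ one may keep $\prod\frac{p-1}{p-2}>3$ while the largest prime grows without bound, and no odd lower bound on $M$ isolates exactly seven factors. I would instead argue by a finite descent. For fixed $k$ the inequality $M<\prod\frac{p_i-1}{p_i-2}$ bounds the smallest few primes (for instance it forces $p_1\in\{3,5\}$); solving the relation for the largest prime gives $p_k=2+\frac{B-1}{MA-B}$ with $A=\prod_{i<k}(p_i-2)$ and $B=\prod_{i<k}(p_i-1)$, where positivity forces $M>\prod_{i<k}\frac{p_i-1}{p_i-2}$. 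The constraint $p_k>p_{k-1}$ then bounds $p_{k-1}$, and iterating bounds every prime, reducing each $k\le6$ to finitely many explicit tuples. Each is eliminated by a direct test of \eqref{Eq1} or by the congruence $p_j-2\mid\prod_{i\ne j}(p_i-1)-1$ that the equation imposes for every $j$.

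The hard part will be making this case analysis genuinely finite and exhaustive. The size inequality controls only the smallest prime factors, so the middle-sized primes must be bounded through the exact equation—via the determined top prime together with the ordering $p_1<\cdots<p_k$—and one must check that, after imposing modular obstructions such as a forced divisibility by $3$, no configuration with $\omega(n)\le6$ survives. I expect essentially all of the effort, including any computational verification, to lie here.
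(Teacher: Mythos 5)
Your preliminary steps are correct and coincide with the paper's: oddness (via $S_2(n)=0$ for even $n$), squarefreeness (via $p\mid\gcd(\phi(n),S_2(n))$ forcing $p\mid 1$), the parity of $M$, the bound $M\ge 3$, and the exclusion of $\omega(n)=2$. But the heart of the theorem is the elimination of $3\le\omega(n)\le 6$, and there your proposal stops being a proof: you describe a finite-descent enumeration (bound the small primes, solve $p_k=2+\frac{B-1}{MA-B}$ for the top prime, iterate, test each surviving tuple) and then explicitly defer ``essentially all of the effort, including any computational verification'' to this unexecuted analysis. Such a descent can probably be made finite and exhaustive --- the sandwich $\prod_{i<k}\frac{p_i-1}{p_i-2}<M<\prod_{i\le k}\frac{p_i-1}{p_i-2}$ together with $p_k>p_{k-1}$ does bound every prime --- but as written nothing in the proposal actually rules out any of the cases $\omega(n)\in\{3,4,5,6\}$. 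That is a genuine gap, not a detail.

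Moreover, your premise that ``the product inequality alone does not suffice'' undersells what you already have: combined with the parity of $M$, it does nearly all the work, with no enumeration at all. Since $\frac{x-1}{x-2}$ is decreasing, for $3\le r\le 6$ one has $M<\prod_{i=1}^{r}\frac{q_i-1}{q_i-2}<5$, where $q_1<q_2<\cdots$ are the odd primes $3,5,7,11,13,17$; as $M$ is odd and $M\ge 3$, this pins down $M=3$ exactly. Now split on whether $3\mid n$. If $3\nmid n$, then $\frac{\phi(n)}{S_2(n)}\le\prod_{i=2}^{7}\frac{q_i-1}{q_i-2}<3=M$, contradicting $\frac{\phi(n)}{S_2(n)}>M$. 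If $3\mid n$, write $n=3p_2\cdots p_r$; the equation becomes $3\prod_{i\ge2}(p_i-2)=2\prod_{i\ge2}(p_i-1)-1$, which modulo $3$ forces $\prod_{i\ge2}(p_i-1)\equiv 2\pmod{3}$, impossible because each $p_i\ne 3$ gives $p_i-1\equiv 0$ or $1\pmod{3}$, so the product is $\equiv 0$ or $1$. Your sketch gestures at ``modular obstructions such as a forced divisibility by $3$,'' but you never derive $M=3$ or set up the dichotomy on $3\mid n$ that makes the obstruction decisive; supplying those two observations would replace your entire case analysis with three lines.
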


Inspired by the work of Burek and {\.Z}mija we will also get an upper bound for Deaconescu numbers.
\begin{thm}
\label{Thm2}
If $n$ has the Deaconescu property, then 
\begin{align*}
n<2^{2^K+K}-2^{2^{K-1}+K},
\end{align*}
where $K=\omega(n)$.
\end{thm}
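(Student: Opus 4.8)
The plan is to reduce the Deaconescu condition to an explicit Diophantine relation among the prime factors. By Theorem~\ref{Thm1}, $n$ is odd and squarefree, so writing $n=p_1\cdots p_K$ with $3\le p_1<\cdots<p_K$ we have $\phi(n)=\prod_{i=1}^K(p_i-1)$ and $S_2(n)=\prod_{i=1}^K(p_i-2)$, and \eqref{Eq1} becomes
\[
M\prod_{i=1}^K(p_i-2)=\prod_{i=1}^K(p_i-1)-1 .
\]
First I would pin down the multiplier: dividing by $S_2(n)$ gives $M<\prod_{i=1}^K\frac{p_i-1}{p_i-2}<M+1$, so that $M$ is exactly the integer part of $\prod_{i}(p_i-1)/(p_i-2)$ and is small (each factor lies in $(1,2]$). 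The point of this step is that $M$ is essentially determined by the primes, which is what lets me solve the equation for a single prime.

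The engine of the bound is an inequality for the largest prime. Setting $\Phi'=\prod_{i<K}(p_i-1)$ and $\Psi'=\prod_{i<K}(p_i-2)$ and isolating the factors containing $p_K$, the displayed equation rearranges to
\[
p_K-2=\frac{\Phi'-1}{M\Psi'-\Phi'},
\]
where the denominator $M\Psi'-\Phi'$ is a positive integer. Hence $p_K-1\le\Phi'=\prod_{i<K}(p_i-1)$, which I would record as the multiplicative recursion $\Phi_K\le\Phi_{K-1}^2$, where $\Phi_j:=\prod_{i=1}^j(p_i-1)$. Writing $B_j=2^{2^j}-2^{2^{j-1}}$, a direct computation gives $B_{j-1}^2\le B_j$, so this recursion propagates the bound $\Phi_{K-1}\le B_{K-1}\Rightarrow\Phi_K\le B_K$. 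Once $\prod_i(p_i-1)\le B_K$ is in hand, the theorem follows by converting from $\phi$ back to $n$: since $p_i/(p_i-1)<2$ for every $i$, one gets $n=\prod_i p_i<2^K\prod_i(p_i-1)\le 2^K B_K=2^{2^K+K}-2^{2^{K-1}+K}$. This also explains the shape of the bound: the extra factor $2^K$, hence the shift in both exponents relative to the Lehmer estimate $B_K$, comes precisely from passing from $\phi(n)$ to $n$.

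The hard part is justifying the recursion at every level rather than only at the top. The identity for $p_K$ rests on the full Deaconescu equation for $\{p_1,\dots,p_K\}$, whereas the analogous bound $p_j-1\le\prod_{i<j}(p_i-1)$ for smaller $j$ is not immediate, since a truncated set $\{p_1,\dots,p_j\}$ need not satisfy any Deaconescu-type relation; equivalently, the induction needs a valid anchor, and here the naive base case $\Phi_1\le B_1=2$ would literally require $p_1=3$, which is not guaranteed. This is exactly the difficulty that Burek and {\.Z}mija overcome in the Lehmer setting, and I would adapt their refined inductive scheme: run the induction on a class of prime tuples that is closed under deletion of the largest prime and strong enough to force $p_j-1\le\prod_{i<j}(p_i-1)$ at each stage, with the extremal configuration being a Sylvester-type sequence so that the worst case is anchored at $p_1=3$, $\Phi_1=2=B_1$. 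I expect the principal technical obstacle to be this anchoring — controlling the smallest prime factors of $n$ and verifying the truncation property — together with the bookkeeping required to retain the subtracted term $2^{2^{K-1}+K}$ rather than settling for the cruder estimate $2^{2^{K+1}}$ stated in the abstract; the oddness and squarefreeness supplied by Theorem~\ref{Thm1} are what make both the conversion factor $2^K$ and the base value $p_1=3$ available in the first place.
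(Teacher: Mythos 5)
Your reduction and the two end steps are fine: the identity $(p_K-2)(M\Psi'-\Phi')=\Phi'-1$ (with $\Phi'=\prod_{i<K}(p_i-1)$, $\Psi'=\prod_{i<K}(p_i-2)$) is correct, its denominator is indeed a positive integer, so $p_K-1\le\Phi'$, and the final conversion $n<2^K\prod_i(p_i-1)$ via $p_i<2(p_i-1)$ is exactly how the paper finishes. The gap is the middle claim, $\prod_{i=1}^K(p_i-1)\le 2^{2^K}-2^{2^{K-1}}$, which is the entire content of the theorem. Your recursion $\Phi_K\le\Phi_{K-1}^2$ is available only at the top level $j=K$, because only the full set $\{p_1,\dots,p_K\}$ satisfies \eqref{Eq1}; to propagate it you would need $p_j-1\le\prod_{i<j}(p_i-1)$ for every $j$, and this is not merely ``lacking an anchor,'' it is false: at $j=2$ it reads $p_2-1\le p_1-1$, impossible since $p_2>p_1$ (even the Sylvester-type tuples $3,7,43,\dots$ that you invoke as extremal violate it at the second term). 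Hence no class of prime tuples closed under deletion of the largest prime can have the property you ask for, and the induction cannot be repaired in that form: once the largest prime is deleted, nothing ties $p_1,\dots,p_{K-1}$ together, so no bound on $\Phi_{K-1}$ exists at all.

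The paper does not attempt such a self-contained induction; it invokes the lemma of Nielsen quoted in Section 2. What you actually proved is precisely the hypothesis \eqref{Eq3} of that lemma in disguise: positivity of $M\Psi'-\Phi'$ is equivalent to the upper inequality $\frac{S_2(n)}{\phi(n)-1}=\frac{1}{M}<\prod_{j=1}^{K-1}\big(1-\frac{1}{p_j-1}\big)$ (the paper derives it instead from $\phi(n)>p_K-1$), while the lower inequality $\prod_{j=1}^{K}\big(1-\frac{1}{p_j-1}\big)=\frac{S_2(n)}{\phi(n)}<\frac{1}{M}$ is immediate. Applying \eqref{Eq4} with $x_j=p_j-1$, $r=K$, $a=1$ then yields $\prod_{j=1}^K(p_j-1)\le 2^{2^K}-2^{2^{K-1}}$ at once. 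The combinatorial heart --- that the two-sided condition \eqref{Eq3} forces Sylvester-type extremality --- lives inside Nielsen's lemma, whose induction modifies the sequence rather than truncating it; that is the piece your sketch would have to reconstruct, and citing that lemma (or reproducing the Burek--{\.Z}mija argument) is what closes the gap.
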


Hernandez and Luca \cite{HL} proved that there are at most finitely many Lehmer numbers $n$ such that $P(\phi(n))\equiv0\ (\text{mod} \ n)$, where $P(X)\in\mathbb{Z}[X]$ is any monic non-constant polynomial. We will prove the analogous result for Deaconescu numbers.
\begin{thm}
\label{Thm3}
Let $P(X)\in\mathbb{Z}[X]$ be a monic non-constant polynomial. Then there are at most finitely many composite integers $n$ such that $S_2(n)|\phi(n)-1$ and $P(S_2(n))\equiv0\ (\text{mod} \ \phi(n))$.
\end{thm}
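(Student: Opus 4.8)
The plan is to mirror the Hernandez--Luca argument \cite{HL}, exploiting the congruence $\phi(n)\equiv 1\pmod{S_2(n)}$ in place of $n\equiv 1\pmod{\phi(n)}$. Throughout I consider a composite $n$ satisfying both hypotheses; by Theorem \ref{Thm1} such an $n$ is odd and squarefree, so writing $n=p_1\cdots p_K$ with each $p_i\ge 3$ we have $\phi(n)=\prod_i(p_i-1)$ and $S_2(n)=\prod_i(p_i-2)$. Set $d=\deg P\ge 1$, write $P(X)=X^d+c_{d-1}X^{d-1}+\cdots+c_0$, and let $k=(\phi(n)-1)/S_2(n)$, a positive integer, so that $S_2(n)=(\phi(n)-1)/k$.

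First I would convert the hypothesis $\phi(n)\mid P(S_2(n))$ into a divisibility involving only $k$. Substituting $S_2(n)=(\phi(n)-1)/k$ and clearing denominators gives $k^d P(S_2(n))=\sum_{j=0}^{d} c_j(\phi(n)-1)^j k^{d-j}$ (with $c_d=1$). Since $(\phi(n)-1)^j\equiv(-1)^j\pmod{\phi(n)}$, reducing modulo $\phi(n)$ yields $k^d P(S_2(n))\equiv R(k)\pmod{\phi(n)}$, where $R(k)=\sum_{j=0}^{d} c_j(-1)^j k^{d-j}=k^dP(-1/k)$. Because $\phi(n)\mid P(S_2(n))$ forces $\phi(n)\mid k^dP(S_2(n))$, I obtain the clean relation $\phi(n)\mid R(k)$, with $|R(k)|\le C_P\,k^d$ for the constant $C_P=\sum_j|c_j|$.

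Next I would bound $k$. From $k<\phi(n)/S_2(n)=\prod_{p\mid n}\tfrac{p-1}{p-2}$ together with $\log\tfrac{p-1}{p-2}\le\tfrac{1}{p-2}\le\tfrac{3}{p}$ for $p\ge 3$, the standard estimate $\sum_{p\mid n}\tfrac1p\le\log\log n+O(1)$ gives $k\ll(\log n)^3$. If $R(k)\ne 0$, then $\phi(n)\le|R(k)|\le C_P k^d\ll(\log n)^{3d}$; combined with the classical lower bound $\phi(n)\gg n/\log\log n$ this yields $n\ll(\log n)^{3d}\log\log n$, which holds for only finitely many $n$.

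The main obstacle is the degenerate case $R(k)=0$, which the size argument does not control and which must be excluded separately. Here $R(k)=k^dP(-1/k)=0$ with $k\ge 1$ forces $P(-1/k)=0$, so $-1/k$ is a rational root of the monic integer polynomial $P$ and hence an integer; thus $k=1$. But $k=1$ means $\phi(n)=S_2(n)+1$, i.e. $\prod_i(p_i-1)-\prod_i(p_i-2)=1$. The elementary identity $D_K:=\prod_{i\le K}(p_i-1)-\prod_{i\le K}(p_i-2)$ satisfies $D_1=1$ and $D_K=(p_K-2)D_{K-1}+\prod_{i<K}(p_i-1)$, whence $D_K\ge 3$ once $K\ge 2$. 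Therefore $k=1$ forces $K=1$ and $n$ prime, contradicting compositeness. Consequently $R(k)\ne 0$ for every composite $n$ under consideration, the bound of the previous paragraph applies, and only finitely many such $n$ exist.
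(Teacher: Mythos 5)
Your proof is correct, and its skeleton is the same as the paper's (both are modeled on the Hernandez--Luca argument): multiply the congruence by the $d$-th power of the quotient $k=(\phi(n)-1)/S_2(n)$, reduce modulo $\phi(n)$ using $kS_2(n)\equiv-1\pmod{\phi(n)}$ to obtain $\phi(n)\mid R(k)$ with $R(k)=k^dP(-1/k)$, and then split on whether $R(k)$ vanishes. The differences lie in the auxiliary ingredients, and they make your version more self-contained. For the size of $k$ the paper invokes Yip's bound $S_2(n)\gg n/(\log\log 3n)^2$, giving $M\ll(\log\log n)^2$, whereas you use an elementary Mertens-type estimate to get the weaker but entirely sufficient $k\ll(\log n)^3$; similarly you use $\phi(n)\gg n/\log\log n$ where the paper uses $\phi(n)\ge\sqrt{n}$ for odd $n$. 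In the degenerate case $R(k)=0$ the paper is finished immediately: its Lemma 2.2 (a composite $n$ cannot lie in $D_1$) together with the parity of $\phi(n)-1$ forces $M\ge3$, so $-1/M$ cannot be both rational and an algebraic integer. You do not use that $M\ge 3$, so the rational-root argument only pins down $k=1$, and you must (and do, correctly) exclude this by showing $\prod_i(p_i-1)-\prod_i(p_i-2)\ge3$ whenever $\omega(n)\ge2$, which in effect reproves the paper's Lemma 2.2 for odd squarefree $n$ via your recursion for $D_K$. Both routes are sound; the paper's appeal to Yip's theorem buys a sharper bound on the quotient that is never actually needed, while your argument needs nothing beyond Mertens' theorem, the classical lower bound for $\phi$, and Theorem \ref{Thm1}.
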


\section{Preliminaries}

In this section we shall collect some preliminary results. First, we give a group-theoretic interpretation of Schemmel's totient function.

\begin{defn}
Ler $R$ be a commutative ring with identity and $R^*$ be the multiplicative group of its units. A unit $u\in R$ is called exceptional if $1-u\in R^*$.
\end{defn}

Let $R^{**}$ denote the set of all exceptional units in $R$. In particular if $R=\mathbb{Z}_n$ the ring of residue classes mod $n$, then by definition we have 
\begin{align*}
\mathbb{Z}^{**}_n=\{a\in\mathbb{Z}_n: \ \text{gcd}(a,n)=1 \ \text{and} \ \text{gcd}(a-1,n)=1 \}.
\end{align*}

In 2010, Harrington and Jones \cite{HJ} proved that 
\begin{align*}
|\mathbb{Z}^{**}_n|=S_2(n).
\end{align*}

Note that $\phi(n)$ is always even if $n>2$. Thus $M$ in \eqref{Eq1} must be odd.

\begin{lem}
$n\in D_1$ if and only if $n=p$ for some prime $p$.
\end{lem}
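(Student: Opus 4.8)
The plan is to note that the membership condition for $D_1$ is just the equation $\phi(n)=S_2(n)+1$, equivalently $\phi(n)-S_2(n)=1$, and to study this difference through the multiplicative forms of the two totients. The implication that every prime lies in $D_1$ is an immediate computation: for an odd prime $p$ one has $\phi(p)=p-1$ and $S_2(p)=p-2$, while for $p=2$ one has $\phi(2)=1$ and $S_2(2)=0$; in both cases $\phi(p)-S_2(p)=1$.

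For the converse, suppose $n\in D_1$ with $n\ge 2$, write $n=\prod_{i=1}^{k}p_i^{a_i}$, and let $m=\prod_{i=1}^{k}p_i$ denote its radical. Using $\phi(n)=(n/m)\prod_{i}(p_i-1)$ and $S_2(n)=(n/m)\prod_{i}(p_i-2)$, the defining relation becomes
\[
\frac{n}{m}\Big(\prod_{i=1}^{k}(p_i-1)-\prod_{i=1}^{k}(p_i-2)\Big)=1.
\]
First I would dispose of the even case: if $2\mid n$ then some factor $p_i-2$ vanishes, the second product is $0$, and the relation reduces to $\phi(n)=1$, forcing $n=2$, which is prime. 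So I may assume $n$ is odd, so that every $p_i\ge 3$ and each $p_i-1>p_i-2\ge 1$.

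Under this assumption both $n/m$ and the bracketed difference are positive integers whose product is $1$, so each equals $1$. The equality $n/m=1$ shows $n$ is squarefree. For the second equality I would set $b_i:=p_i-2\ge 1$ and expand
\[
\prod_{i=1}^{k}(b_i+1)-\prod_{i=1}^{k}b_i=\sum_{S\subsetneq\{1,\dots,k\}}\prod_{i\in S}b_i=1+\sum_{\emptyset\neq S\subsetneq\{1,\dots,k\}}\prod_{i\in S}b_i,
\]
the sums running over proper subsets. If $k\ge 2$, the singleton subsets alone contribute $\sum_{i}(p_i-2)\ge k>0$, so the difference strictly exceeds $1$, a contradiction. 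Hence $k=1$ and $n=p_1$ is prime.

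There is no deep obstacle here: the substance of the argument is recognizing that $\phi(n)-S_2(n)$ factors cleanly through the radical as $(n/m)\big(\prod(p_i-1)-\prod(p_i-2)\big)$, and that the elementary expansion above makes the strict inequality for $k\ge 2$ transparent. The only point requiring care is to separate the even case before dividing, since there $p_i-2$ can be zero and the ``positive integer'' reasoning would break down.
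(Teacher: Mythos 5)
Your proof is correct, but it follows a genuinely different route from the paper's. The paper argues via the group-theoretic interpretation $S_2(n)=|\mathbb{Z}^{**}_n|$ (the count of exceptional units): for composite $n$ it exhibits two residues, $1$ and $p_1+1$, that lie in $\mathbb{Z}^{*}_n$ but not in $\mathbb{Z}^{**}_n$ (because $1-1=0$ and $(p_1+1)-1=p_1$ fail to be coprime to $n$), which immediately forces $S_2(n)\le\phi(n)-2<\phi(n)-1$. You instead work directly from the multiplicative formula, factoring $\phi(n)-S_2(n)=(n/m)\bigl(\prod_i(p_i-1)-\prod_i(p_i-2)\bigr)$ through the radical $m$, and then using the subset expansion of $\prod_i(b_i+1)$ with $b_i=p_i-2$ to show the bracketed difference strictly exceeds $1$ as soon as $k\ge2$. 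Each approach has its advantages: the paper's counting argument is shorter and conceptually clean, but it leans on the Harrington--Jones identity $|\mathbb{Z}^{**}_n|=S_2(n)$, and as written (the composite $n$ is factored with $r>1$ distinct primes) it does not explicitly treat prime powers $p^a$ with $a\ge2$, nor the even case, where the assertion $p_1+1\in\mathbb{Z}^{*}_n$ can fail (e.g.\ $n=6$); your computation is self-contained, needs nothing beyond the defining product formulas, and disposes of squarefreeness (via $n/m=1$), prime powers, and even $n$ uniformly, at the cost of a slightly longer combinatorial expansion. Both arguments are valid; yours is the more elementary and the more complete write-up.
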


\begin{proof}
If $n=p$, it is in $D_1$. If $n=p^{\alpha_1}_1\ldots p^{\alpha_r}_r$ ($r>1$) with $p_1<p_2<\ldots p_r$ then $S_2(n)<\phi(n)-1$ since $1\in\mathbb{Z}^{*}_n$ and $p_1+1\in\mathbb{Z}^{*}_n$ but $1\notin\mathbb{Z}^{**}_n$ and $p_1+1\notin\mathbb{Z}^{**}_n$.
\end{proof}

From now on we assume that $M>1$, $n$ always denotes an integer greater than 1 in $D_M$ for some $M>1$. Then we have
\begin{align}
\label{Eq2}
\frac{\phi(n)}{S_2(n)}>M\ge3.
\end{align}

The next lemma due to Nielsen \cite{N} plays important role in the proof of the upper bound for numbers with Deaconescu property.
\begin{lem}
Let $r,a,b\in\mathbb{N}$ and $x_1,\ldots,x_r$ be integers such that $1<x_1<x_2<\ldots<x_r$ and 
\begin{align}
\label{Eq3}
\prod_{j=1}^r \bigg(1-\frac{1}{x_j}\bigg)\le \frac{a}{b}<\prod_{j=1}^{r-1} \bigg(1-\frac{1}{x_j}\bigg).
\end{align}
Then 
\begin{align}
\label{Eq4}
a\prod_{j=1}^r x_j\le (a+1)^{2^r}-(a+1)^{2^{r-1}}.
\end{align}
\end{lem}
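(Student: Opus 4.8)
The natural plan is to induct on $r$. For the base case $r=1$ the hypothesis \eqref{Eq3} reads $(x_1-1)/x_1\le a/b<1$. The right-hand inequality forces $b\ge a+1$, while clearing denominators on the left gives $(b-a)x_1\le b$, whence $x_1\le b/(b-a)\le a+1$, the second inequality being equivalent to $b\ge a+1$. Therefore $ax_1\le a(a+1)=(a+1)^2-(a+1)$, which is \eqref{Eq4} for $r=1$.

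For the inductive step I would drop the smallest index. Dividing \eqref{Eq3} through by $1-1/x_1$ shows that the shorter tuple $x_2<\dots<x_r$ satisfies an inequality of the same shape,
\[
\prod_{j=2}^r\Big(1-\frac{1}{x_j}\Big)\le\frac{ax_1}{b(x_1-1)}<\prod_{j=2}^{r-1}\Big(1-\frac{1}{x_j}\Big),
\]
now with the positive integers $ax_1$ and $b(x_1-1)$ in place of $a$ and $b$. Applying the induction hypothesis \eqref{Eq4} at level $r-1$ with numerator $ax_1$, and using $\prod_{j=2}^r x_j=\frac{1}{x_1}\prod_{j=1}^r x_j$, I obtain
\[
a\prod_{j=1}^r x_j=(ax_1)\prod_{j=2}^r x_j\le(ax_1+1)^{2^{r-1}}-(ax_1+1)^{2^{r-2}}.
\]
Since $s\mapsto s^{2^{r-1}}-s^{2^{r-2}}$ is increasing for $s\ge1$, this already yields \eqref{Eq4} as soon as $ax_1+1\le(a+1)^2$, that is, as soon as $x_1\le a+2$.

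The main obstacle is precisely the regime $x_1>a+2$, where the displayed estimate is too lossy: the inflated numerator $ax_1$ overshoots the target. Handling it is the crux. One must exploit that a large $x_1$ forces $a/b$ to lie very close to $1$, hence $b$ to be close to $a$ and the product $\prod_{j=1}^r(1-1/x_j)$ to be squeezed, so that the remaining factors are correspondingly constrained and the total $a\prod_{j=1}^r x_j$ still stays below \eqref{Eq4}. Equivalently, the induction must be organised so that the effective numerator grows by \emph{at most} a squaring at each reduction step; it is this controlled squaring that propagates $a+1$ into $(a+1)^{2^{r}}$ and thereby produces the doubly-exponential shape of \eqref{Eq4}. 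I expect this balancing between the two regimes to be the only genuinely delicate point, everything else being the bookkeeping above.
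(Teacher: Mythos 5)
Your base case and your reduction step are both correct as far as they go, but what you have written is not a proof: the regime $x_1>a+2$, which you set aside as ``the crux'', is genuinely nonempty, and your induction cannot be pushed through it. Concretely, take $a=4$, $b=5$, $r=2$, $(x_1,x_2)=(8,11)$: then
\[
\Big(1-\tfrac{1}{8}\Big)\Big(1-\tfrac{1}{11}\Big)=\tfrac{35}{44}\le\tfrac{4}{5}<\tfrac{7}{8}=1-\tfrac{1}{8},
\]
so the hypothesis \eqref{Eq3} holds, while $x_1=8>a+2=6$. Your reduced instance carries the fraction $\frac{ax_1}{b(x_1-1)}=\frac{32}{35}$, which is already in lowest terms (so no gcd reduction can rescue the step), and your chain of inequalities yields only $ax_1x_2\le 33^2-33=1056$, whereas the bound \eqref{Eq4} to be proven is $(a+1)^4-(a+1)^2=600$ (the true value is $352$). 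Thus the mechanism you hope for --- ``the effective numerator grows by at most a squaring at each reduction step'' --- is precisely what dropping the smallest index fails to deliver, and your final paragraph is a description of what a proof would have to accomplish, not an argument. That is a genuine gap, not a routine detail.

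For comparison: the paper itself contains no proof of this lemma; it is quoted from Nielsen \cite{N}, and the content you are missing is essentially the whole of Nielsen's argument. The decisive extra ingredient there is integrality of the gaps, not mere positivity: for $k<r$ the strict inequality $a/b<\prod_{j\le k}(1-1/x_j)$ between rationals with denominators dividing $b\prod_{j\le k}x_j$ forces the integer $\Delta_k=b\prod_{j\le k}(x_j-1)-a\prod_{j\le k}x_j$ to satisfy $\Delta_k\ge 1$. Combined with the left-hand inequality of \eqref{Eq3}, this gives $x_r\Delta_{r-1}\le b\prod_{j<r}(x_j-1)$, hence
\[
x_r\le \frac{a\prod_{j<r}x_j}{\Delta_{r-1}}+1\le a\prod_{j<r}x_j+1,
\]
which is the controlled squaring $P\mapsto P(P+1)$ that actually produces the doubly exponential shape of \eqref{Eq4} (the remaining work is to organize the induction so that $a\prod_{j<r}x_j$ is itself bounded, which is nontrivial because the truncated tuple does not satisfy the full sandwich condition). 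Your write-up uses integrality only in the base case; without exploiting it at every level, the bound $(a+1)^{2^r}-(a+1)^{2^{r-1}}$ is out of reach.
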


\section{Proofs}

\begin{proof}[Proof of Theorem \ref{Thm1}]
By definition of $S_2(n)$ it is clear that $n$ must be odd. If $n$ is not squarefree then $n$ has a prime factor $p_i$ for which $p_i|\phi(n)$ and $p_i|S_2(n)$. In this case if $n$ is Deaconescu number then $p_i|1$ which is impossible. Next, we show that $\omega(n)\ne2$. When $n=p_1p_2$ equation \eqref{Eq1} becomes
\begin{align*}
M(p_1-2)(p_2-2)=(p_1-1)(p_2-1)-1 \ \text{or} \ M-1=\frac{1}{p_1-2}+\frac{1}{p_2-2}.
\end{align*}
Hence $0<M-1\le1+\frac{1}{3}=\frac{4}{3}$ and $M=2$. which is impossible since $M$ is odd. If $2<\omega(n)\le6$, then $M=3$ and $3|n$. Indeed, if $n=p_1p_2\ldots p_r$ with $p_1<p_2<\ldots<p_r$ then \eqref{Eq2} gives $M<\frac{\phi(n)}{S_2(n)}\le\prod_{i=1}^r\frac{q_i-1}{q_i-2}=Q_r$, where $\{q_i\}=\{3,5,7,\ldots\}$ denotes the sequence of all odd primes. Since $Q_r<5$ for $2<r\le6$, we get $M=3$. If $2<r\le6$ and $3\nmid n$, then 
\begin{align*}
\frac{\phi(n)}{S_2(n)}=\prod_{i=1}^r\frac{p_i-1}{p_i-2}\le\prod_{i=1}^6\frac{q_{i+1}-1}{q_{i+1}-2}<3
\end{align*}
contradicting \eqref{Eq2}. Hence $3|n$. But if $n=3p_2\ldots p_r$ and $M=3$ equation \eqref{Eq1} becomes
\begin{align*}
3(p_2-2)\cdots(p_r-2)=2(p_2-1)\cdots(p_r-1)-1.
\end{align*}
Taking this equation modulo 3 we see that it has no solutions in primes. Hence $\omega(n)\ge7$.
\end{proof}

\begin{proof}[Proof of Theorem 1.2]
Let us write $n=p_1\cdots p_K$ where $p_1<p_2<\ldots<p_K$. Then 
\begin{align*}
\prod_{j=1}^K \bigg(1-\frac{1}{p_j-1}\bigg)=\frac{S_2(n)}{\phi(n)}<\frac{S_2(n)}{\phi(n)-1}.
\end{align*}
Moreover, 
\begin{align*}
\frac{\frac{S_2(n)}{\phi(n)-1}}{\prod\limits_{j=1}^{K-1} \bigg(1-\frac{1}{p_j-1}\bigg)}&=\frac{n\prod\limits_{j=1}^K \bigg(1-\frac{2}{p_j}\bigg)}{(\phi(n)-1)\prod\limits_{j=1}^{K-1} \bigg(1-\frac{1}{p_j-1}\bigg)}\\ 
&=\frac{n\prod\limits_{j=1}^{K-1} \bigg(1-\frac{2}{p_j}\bigg)\bigg(1-\frac{2}{p_K}\bigg)}{(\phi(n)-1)\prod\limits_{j=1}^{K-1} \bigg(1-\frac{1}{p_j-1}\bigg)}=\frac{n\prod\limits_{j=1}^{K-1} \bigg(1-\frac{1}{p_j}\bigg)\bigg(1-\frac{2}{p_K}\bigg)}{\phi(n)-1}\\ 
&=\frac{\phi(n)}{\phi(n)-1}\cdot\frac{\bigg(1-\frac{2}{p_K}\bigg)}{\bigg(1-\frac{1}{p_K}\bigg)}<1
\end{align*}
since $\phi(n)>p_K-1$. Thus 
\begin{align*}
\prod\limits_{j=1}^{K} \bigg(1-\frac{1}{p_j-1}\bigg)<\frac{S_2(n)}{\phi(n)-1}<\prod\limits_{j=1}^{K-1} \bigg(1-\frac{1}{p_j-1}\bigg).
\end{align*}
Hence, the inequality \eqref{Eq3} is satisfied for $x_j=p_j-1$, $r=K$, $a=1$, $b=\frac{\phi(n)-1}{S_2(n)}$. From \eqref{Eq4} we get 
\begin{align*}
(p_1-1)\ldots(p_K-1)\le 2^{2^K}-2^{2^{K-1}}.
\end{align*}
Since $p-1>\frac{p}{2}$ for all primes $p\ge3$, we have 
\begin{align*}
n=p_1\cdots p_K<2^{2^K+K}-2^{2^{K-1}+K}
\end{align*}
\end{proof}

\begin{proof}[Proof of Theorem 1.3]

We follow closely an argument in \cite{HL}. Let 
\begin{align*}
P(X)=X^d+a_1X^{d-1}+\ldots+a_d\in\mathbb{Z}[X]
\end{align*}
with $d\ge1$. Suppose $n$ is a Deaconescu number. It is known that there exists a positive constant $c$ such that $S_2(n)\ge\frac{cn}{(\log\log{3n})^2}$ for all odd $n$ (see \cite{Y}). Then 
\begin{align}
\label{Eq5}
M\ll (\log\log{n})^2.
\end{align}
Since $P(S_2(n))\equiv0\ (\text{mod} \ \phi(n))$ we have that $M^dP(S_2(n))\equiv0\ (\text{mod} \ \phi(n))$. Thus by \eqref{Eq1}, we get 
\begin{align*}
(-1)^d+a_1M(-1)^{d-1}+\ldots+a_dM^d\equiv0(\text{mod}\ \phi(n)).
\end{align*}
Let $S$ denote the left hand side of the above congruence. Now we consider two case:\\ 
{\it Case I}. $S\ne0$. Note that $\phi(n)\ge\sqrt{n}$ for all odd $n$. Then from the above congruence and \eqref{Eq5}, we have that 
\begin{align*}
\sqrt{n}\le\phi(n)\le|S|<\bigg(1+\sum_{j=1}^d |a_j|\bigg)M^d\ll(\log\log{n})^{2d}
\end{align*}
which implies $n\ll1$, as we want.\\ 
{\it Case II}. $S=0$. Then $(-1)^d+a_1M(-1)^{d-1}+\ldots+a_dM^d=0$ or 
\begin{align*}
\bigg(-\frac{1}{M}\bigg)^d+a_1\bigg(-\frac{1}{M}\bigg)^{d-1}+\ldots+a_d=0
\end{align*}
or $P\bigg(-\frac{1}{M}\bigg)=0$. Thus we get that $-\frac{1}{M}$ is both an algebraic integer and a rational number which is impossible since $M\ge3$.
\end{proof}


\normalsize


\begin{thebibliography}{10}

\bibitem{BCF}
P.~Burcsi, S.~Czirbusz and G.~Farkas,
\newblock{Computational investigation of Lehmer's totient problem},
\newblock{\em Ann. Univ. Sci. Budapest. Sect. Comput.}, {\bf 35} (2011), 43-49.

\bibitem{BZ}  
D.~Burek and B.~{\.Z}mija,
\newblock{A new upper bound for numbers with the Lehmer property and its application to repunit numbers},
\newblock{\em Int. J. Number Theory}, {\bf 15} (2019), no. 7, 1463-1468.

\bibitem{CH}
G.~L. Cohen and P.~Hagis,
\newblock{On the number of prime factors of $n$ if $\phi(n)|n-1$},
\newblock{\em Nieuw. Arch. Wisk. (3)}, {\bf 28} (1980), no. 2, 177-185.

\bibitem{D}
M.~Deaconescu,
\newblock{Adding units mod $n$},
\newblock{\em Elem. Math.}, {\bf 55} (2000), no. 3, 123-127.

\bibitem{HJ}
J.~Harrington and L.~Jones,
\newblock{On the iteration of a function related to Euler's $\phi$ function},
\newblock{\em Integers}, {\bf 10} (2010), A42, 19 pp.

\bibitem{HL}
S.~H. Hernandez and F.~Luca,
\newblock{A note on Deaconescu's result concerning Lehmer's problem},
\newblock{\em Integers}, {\bf 8} (2008), A12, 3 pp.

\bibitem{L}
D.~H. Lehmer,
\newblock{On Euler's totient function},
\newblock{\em Bull. Amer. Math, Soc.}, {\bf 38} (1932), 745-751.

\bibitem{N}
P.~Nielsen,
\newblock{Odd perfect numbers, Diophantine equations, and upper bounds},
\newblock{\em Math. Comp.}, {\bf 84} (2015), 2549-2567.

\bibitem{P}
C.~Pomerance,
\newblock{On composite $n$ for which $\phi(n)|n-1$, II},
\newblock{\em Pacific J. Math.}, {\bf 69} (1977), no. 2, 177-186.


\bibitem{Y}
L.-W.~Yip,
\newblock{On Carmichael type problems for the Schemmel totients and some related questions},
\newblock{\em Thesis (Ph.D)}, University of Alberta (Canada), 1989.


\end{thebibliography}
\end{document}